\documentclass[12pt,reqno]{amsart}
\usepackage{amsmath,amssymb,amsfonts,amscd,latexsym,amsthm,mathrsfs,verbatim,comment,cite}
\usepackage[unicode]{hyperref}
\textheight22cm \textwidth15cm \hoffset-1.7cm \voffset-.5cm

\newtheorem*{mtheorem}{Main theorem}
\newtheorem{lemma}{Lemma}

\newcommand{\qbin}[2]{\genfrac{[}{]}{0pt}{}{#1}{#2}}
\renewcommand{\pmod}[1]{\;\allowbreak(\operatorname{mod}#1)}

%
\begin{document}

\title[\protect\lowercase{($q$-)}Supercongruences hit again]{($q$-)Supercongruences hit again}

\author{Wadim Zudilin}
\address{Department of Mathematics, IMAPP, Radboud University, PO Box 9010, 6500 GL Nijmegen, The Netherlands}
\email{w.zudilin@math.ru.nl}

\date{4 December 2020. \emph{Updated}: 3 February 2021.}

\dedicatory{In remembrance of Srinivasa Ramanujan,\\a $q$-number theorist and experimental mathematician}

\begin{abstract}
Using an intrinsic $q$-hypergeometric strategy, we generalise Dwork-type congruences
$H(p^{s+1})/H(p^s)\equiv H(p^s)/H(p^{s-1})\pmod{p^3}$ for $s=1,2,\dots$ and $p$ a prime, when $H(N)$ are truncated hypergeometric sums corresponding to the periods of rigid Calabi--Yau threefolds.
\end{abstract}

\subjclass[2010]{11A07, 11B65, 11F33, 33C20, 33D15}
\keywords{Hypergeometric sum; Ramanujan's mathematics; supercongruence; $q$-congruence; creative microscoping; modular form; rigid Calabi--Yau threefold.}

\maketitle

Number theory would not be full without $q$.
It would have missed partitions from combinatorics, cyclotomic polynomials from algebra, modular forms and $q$-hypergeometry from analysis\,---\,a lot of beautiful stuff whose existence we now take for granted.
There would not have been the phenomenon of Srinivasa Ramanujan, who creatively invented many $q$-objects and demonstrated the power of experiment in number theory.

I hope that the story in this note is very much in Ramanujan's spirit, though not particularly touching his discoveries.
While studying myself Ramanujan's famous formulae \cite{Ra14} for $1/\pi$, I was astonished to realise their reincarnation as supercongruences \cite{Zu09}.
It took some time for me to find out that, in many cases, both the former and latter originate from common $q$-analogues.
The principal body of that work \cite{GZ18,GZ19a,GZ19b,GZ20a,GZ20b} went in collaboration with Victor Guo and culminated in a simple analytical method of `creative microscoping' \cite{GZ19a,Zu20} that powerfully converts $q$-hypergeometric identities into congruences.
I must admit that Victor has done way more in this direction (see \cite{Gu19a,Gu19b,Gu20a,Gu20b,Gu20c,Gu20d,GS19,GS20a,GS20b,GS20c} and many other recent contributions by him) but there is still some room for further interesting results.
One of those `open slots' is our subject here.

\section{Supercongruences associated with modular Calabi--Yau threefolds}
\label{sec:CY}

In order to start the theme, we have to introduce some standard notation and conventions.
If you are already familiar with that, feel free to jump to the next paragraph.
We set $(a;q)_n=(1-a)(1-aq)\dotsb(1-aq^{n-1})$ to be the $q$-shifted factorial ($q$-Pochhammer symbol), with its multiple version
$$
(a_1,\dots,a_m;q)_k=\prod_{j=1}^m(a_j;q)_k.
$$
Further, let
\begin{align*}
\Phi_n(q)=\prod_{\substack{1\le k\le n\\ \gcd(n,k)=1}}(q-\zeta_n^k),
\end{align*}
be the $n$th cyclotomic polynomial, where $\zeta_n=e^{2\pi i/n}$ is an $n$th primitive root of unity.
Also recall the ordinary shifted factorial $(a)_n=\Gamma(a+n)/\Gamma(a)=a(a+\nobreak1)\allowbreak\dotsb(a+n-1)$ for $n=0,1,2,\dots$\,.
In what follows, the congruence $A_1(q)/A_2(q)\equiv0\pmod{P(q)}$
for polynomials $A_1(q),A_2(q),P(q)\in\mathbb Z[q]$ is understood as $P(q)$ divides $A_1(q)$ and is coprime with $A_2(q)$;
more generally, $A(q)\equiv B(q)\pmod{P(q)}$ for rational functions $A(q),B(q)\in\mathbb Z(q)$ means $A(q)-B(q)\equiv0\pmod{P(q)}$.

\medskip
Let $r_1,r_2\in\bigl\{\frac 12,\frac13,\frac14,\frac16\bigr\}$
or $(r_1,r_2)\in \bigl\{\bigl(\frac15,\frac25\bigr),\bigl(\frac18,\frac38\bigr), \bigl(\frac1{10},\frac3{10}\bigr), \bigl(\frac1{12},\frac5{12}\bigr)\bigr\}$ (there are precisely fourteen cases of those with $r_1\le r_2$) and $d=d_{r_1,r_2}$ denote the least common denominator of $r_1,r_2$.
For $N=1,2,\dots$, define the truncated $q$-hypergeometric sums
$$
H(N;q)=H_{r_1,r_2}(N;q)
=\sum_{k=0}^{N-1}\frac{(q^{dr_1},q^{d(1-r_1)},q^{dr_2},q^{d(1-r_2)};q^d)_k}{(q^d;q^d)_k^4}\,q^{dk}.
$$

\begin{mtheorem}
For positive integers $A,B,n$ with $(n,d)=1$ and $n>1$, the following $q$-supercongruence is true\textup:
\begin{equation}
\frac{H(An;q)}{H(Bn;q)}\equiv\frac{H(A;q^{n^2})}{H(B;q^{n^2})}\pmod{\Phi_n(q)^3}.
\label{eq:D}
\end{equation}
\end{mtheorem}

Notice that
$$
H(N)=\lim_{q\to1}H(N;q)
=\sum_{k=0}^{N-1}\frac{(r_1)_k(1-r_1)_k(r_2)_k(1-r_2)_k}{k!^4}
$$
and the above congruence (with the choices $A=p^s$, $B=p^{s-1}$ and $n=p$) implies
\begin{equation}
\frac{H(p^{s+1})}{H(p^s)}
\equiv\frac{H(p^s)}{H(p^{s-1})}\pmod{p^3}
\label{eq:C}
\end{equation}
provided that $H(p)\not\equiv0\pmod p$.
Dwork's theory in fact guarantees that
$$
\frac{H(p^{s+1})}{H(p^s)}
\equiv\frac{H(p^s)}{H(p^{s-1})}\pmod{p^s},
$$
so that the quotient $H(p^{s+1})/H(p^s)$ converges $p$-adically to a unit root $\gamma_p$ (see \cite[Theorem 1]{LTYZ17}).
The congruence \eqref{eq:C} valid for all $s=1,2,\dots$ implies that
$$
\gamma_p\equiv\frac{H(p^{s+1})}{H(p^s)}
\equiv\frac{H(p^1)}{H(p^{0})}\pmod{p^3}
=H(p),
$$
so that $H(p)\equiv\gamma_p\equiv a(p)\pmod{p^3}$, where $f=f_{r_1,r_2}=\sum_{n=1}^\infty a(n)q^n$ is the corresponding weight 4 modular form.
This comparison of truncated hypergeometric sum $H(p)$ with the $p$th coefficient of the modular form modulo $p^3$ was originally conjectured by Rodrigues-Villegas \cite{RV03} for the fourteen cases above; those correspond to the modularity instances of rigid Calabi--Yau threefolds whose periods are described through $_4F_3$ hypergeometric differential equations (refer to \cite{LTYZ17,RV03} for details of this setup).
The particular case $r_1=r_2=\frac12$ was conjectured earlier by Van Hamme \cite[Congruence~(M.2)]{VH97} and established later by Kilbourn~\cite{Ki06} (who was unaware of Van Hamme's paper).
The case $(r_1,r_2)=(\frac15,\frac25)$ of Rodrigues-Villegas' conjecture attached to the famous quintic threefold \cite{COGP91} was settled by McCarthy \cite{MC12}, 
while the reduction of case $(r_1,r_2)=(\frac14,\frac12)$ to Kilbourn's result from \cite{Ki06} was performed by McCarthy and Fuselier \cite{FM16}.
Finally, the uniform treatment of all fourteen cases was completed in our joint work \cite{LTYZ17} with Long, Tu and Yui,  using two(!) different methods.
The main theorem here leads to another proof of Rodriguez-Villegas' conjectures in~\cite{RV03}.

In fact, as observed by Roberts and Rodriguez-Villegas in \cite{RRV19} the expectation is that, for $s=1,2,\dots$,
\begin{equation*}
\frac{H(p^{s+1})}{H(p^s)}
\equiv\frac{H(p^s)}{H(p^{s-1})}\pmod{p^{3s}}
\end{equation*}
provided that $H(p)\not\equiv0\pmod p$. There seems to be, however, no clear way to $q$-cast this expectation for $s>1$.

Among all fourteen weight 4 modular forms corresponding to the rigid hypergeometric Calabi--Yau threefolds,
only $f_{1/4,1/3}=q\prod_{n=1}^\infty(1-q^{3n})^8$ is a CM modular form: we have $\gamma_p=-\Gamma_p\bigl(\frac 13\bigr)^9$ for $p\equiv1\pmod3$ and $a(p)=0$ for primes $p\equiv 2\pmod 3$. It is observed numerically in \cite{LTYZ17} that the supercongruence for this case also reflects the additional CM structure: for all primes $p\equiv 1\pmod 3$,
$$
\sum_{k=0}^{p-1}\frac{(\frac13)_k(\frac23)_k(\frac14)_k(\frac34)_k}{k!^4}
\equiv -\Gamma_p\Bigl(\frac 13\Bigr)^9 \pmod{p^4},
$$
which is sharp. 
(The latter $p$-adic gamma value is also featured in \cite[Congruence~(D.2)]{VH97}.
However its known $q$-analogues are obscurely complicated.)
In comparison,  the choice $p^3$ is sharp in all remaining thirteen cases.
This suggests that there might be a closed form for $H_{1/4,1/3}(n;q)\pmod{\Phi_n(q)^3}$ when $n\equiv1\pmod3$.

Finally, we notice the weaker companion to \eqref{eq:D} of shape
\begin{equation}
\frac{H(An;q)}{H(Bn;q)}\equiv\frac{H(A;q^n)}{H(B;q^n)}\pmod{\Phi_n(q)^2},
\label{eq:D0}
\end{equation}
which possesses a stronger form
\begin{equation}
\frac{H(An;q)}{H(Bn;q)}\equiv\frac{H(A;q^n)}{H(B;q^n)}+C(A,B)(n^2-1)(q^n-1)^2\pmod{\Phi_n(q)^3}
\label{eq:D1}
\end{equation}
for the quantity
\begin{equation}
C_{r_1,r_2}(A,B)=d^2\,\frac{H(A;1)\sum_{\ell=0}^{B-1}\ell(\ell+\mu)c(\ell;1)-H(B;1)\sum_{\ell=0}^{A-1}\ell(\ell+\mu)c(\ell;1)}{12\,H(B;1)^2}
\label{eq:CAB}
\end{equation}
with $\mu=\mu_{r_1,r_2}=r_1(1-r_1)+r_2(1-r_2)$.
Particular instances of \eqref{eq:D} and \eqref{eq:D0} are conjectured in \cite{Gu20d} in the case $r_1=r_2=\frac12$.
As we will see below, the congruences \eqref{eq:D1} are derivatives of the congruences \eqref{eq:D}, therefore they do not require separate attention.

There is a numerical evidence that the conditions on $r_1,r_2$ cannot be relaxed, so that the fourteen cases are the only ones when such supercongruences take place.

Denoting
\begin{equation}
c(k;q)=\frac{(q^{dr_1},q^{d(1-r_1)},q^{dr_2},q^{d(1-r_2)};q^d)_k}{(q^d;q^d)_k^4}\,q^{dk}=c(k;1/q)
\label{eq:cgen}
\end{equation}
the $k$th term of the sum $H(N;q)$ and writing \eqref{eq:D} in the form
\begin{equation*}
\frac{H(An;q)}{H(A;q^{n^2})}\equiv\frac{H(Bn;q)}{H(B;q^{n^2})}\pmod{\Phi_n(q)^3},
\end{equation*}
we see that the latter follows from
\begin{equation}
\sum_{k=0}^{n-1}\frac{c(\ell n+k;q)}{c(\ell n;q)}
\equiv\frac{c(\ell;q^{n^2})}{c(\ell n;q)}\sum_{k=0}^{n-1}c(k;q)\pmod{\Phi_n(q)^3}
\label{eq:c}
\end{equation}
to be true for all $\ell=1,2,\dots$\,. 
Also notice that there are `closed forms' for the prefactor on the right-hand side in \eqref{eq:c} in all the fourteen cases, thanks to \cite{St19} (see also \cite[Section~5]{Zu19}).

It does not seem realistic (and deserving!) to record detailed proofs for each case covered in the main theorem.
We illustrate the general strategy on the particular case $r_1=r_2=\frac12$, which is definitely lighter than the others and has a history of its own \cite{Gu20d,Ki06,VH97}.

\section{Details of proofs when $r_1=r_2=1/2$}
\label{sec:1/2}

Below we concentrate entirely on the case $r_1=r_2=\frac12$, so that
\begin{equation}
c(k;q)=\frac{(q;q^2)_k^4}{(q^2;q^2)_k^4}\,q^{2k}.
\label{eq:cf}
\end{equation}
In this situation we will often use the fact that $c(k;q)\equiv0$ modulo $\Phi_n(q)^4$ (hence modulo $\Phi_n(q)^3$ as well!) when the residue $k\pmod n$ exceeds $m=(n-1)/2$; in particular,
$$
\sum_{k=0}^{n-1}c(k;q)\equiv\sum_{k=0}^mc(k;q)\pmod{\Phi_n(q)^3}.
$$

\begin{lemma}
\label{lem:n^2}
In the case $r_1=r_2=\frac12$ we have, for $\ell=0,1,2,\dots$,
$$
\frac{c(\ell;q^{n^2})}{c(\ell n;q)}
\equiv c\Big(\frac{n-1}2;q\Big)^{2\ell}-\frac{\ell(2\ell+1)(n^2-1)}{6}(q^n-1)^2\pmod{\Phi_n(q)^3}.
$$
\end{lemma}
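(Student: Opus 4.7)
The plan is to introduce $u = q^n - 1$; since $u\equiv 0\pmod{\Phi_n(q)}$ we have $u^3\equiv 0\pmod{\Phi_n(q)^3}$, so the target congruence reduces to tracking polynomials in $u$ up to degree~$2$. The first move is to factor each $q$-Pochhammer in $c(\ell n;q)$ according to whether a factor exponent is divisible by $n$; since $n$ is odd one has
\begin{align*}
(q;q^2)_{\ell n} &= (q^n;q^{2n})_\ell \prod_{a=0}^{\ell-1}\prod_{\substack{b=0\\b\ne m}}^{n-1}(1-q^{2an+2b+1}),\\
(q^2;q^2)_{\ell n} &= (q^{2n};q^{2n})_\ell \prod_{a=0}^{\ell-1}\prod_{b=1}^{n-1}(1-q^{2an+2b}),
\end{align*}
which gives the decomposition
$$
\frac{c(\ell;q^{n^2})}{c(\ell n;q)} \;=\; \mathcal{E}(q)\cdot\prod_{a=0}^{\ell-1}\mathcal{B}_a(q)^4,
$$
where $\mathcal{E}(q)=q^{2\ell n(n-1)}\bigl[(q^{n^2};q^{2n^2})_\ell(q^{2n};q^{2n})_\ell\big/\bigl((q^{2n^2};q^{2n^2})_\ell(q^n;q^{2n})_\ell\bigr)\bigr]^4$ collects the ``divisible-by-$n$'' part, and $\mathcal{B}_a(q)$ collects the block-$a$ contributions from the non-divisible factors.

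For $\mathcal{E}$, substitute $q^{jn}=(1+u)^j$ and $q^{jn^2}=(1+u)^{jn}$ and apply $1-(1+u)^j = -ju\bigl(1+\tfrac{j-1}{2}u+\tfrac{(j-1)(j-2)}{6}u^2\bigr)+O(u^3)$ to each of the four Pochhammer products. The leading $u$-powers and integer prefactors cancel in the fourfold ratio, and a bookkeeping exercise using the standard sums $\sum_{i=0}^{\ell-1}(2i+1)=\ell^2$, $\sum_{i=0}^{\ell-1}(2i+1)^2=\ell(4\ell^2-1)/3$, $\sum_{i=1}^\ell i=\ell(\ell+1)/2$, $\sum_{i=1}^\ell i^2=\ell(\ell+1)(2\ell+1)/6$, together with $q^{2\ell n(n-1)}=(1+u)^{2\ell(n-1)}$, yields after cancellation
$$
\mathcal{E}(q) \;\equiv\; 1 - \frac{\ell(2\ell+1)(n^2-1)}{6}\,u^2 \pmod{\Phi_n(q)^3}.
$$
The logarithmic form of this calculation is particularly clean: the $u$-term vanishes because of the cancellation between the $\sum (2i+1)$ and $\sum 2i$ contributions, and the $u^2$-term collapses to the stated formula via the identity $(M-1)(M-5)-(N-1)(N-5) = (M-N)(M+N-6)$ applied to $M = jn,\ N = j$.

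The main obstacle is then to establish $\prod_{a=0}^{\ell-1}\mathcal{B}_a(q)^4 \equiv c(m;q)^{2\ell}\pmod{\Phi_n(q)^3}$. Each block admits the factorization $\mathcal{B}_a(q) = \mathcal{U}(2a;q)\mathcal{V}(2a+1;q)$ with
$$
\mathcal{U}(e;q) = \frac{\prod_{b=1}^m(1-q^{2b}(1+u)^e)}{\prod_{b=0}^{m-1}(1-q^{2b+1}(1+u)^e)}, \qquad
\mathcal{V}(e;q) = \frac{\prod_{j=1}^m(1-q^{2j-1}(1+u)^e)}{\prod_{j=1}^m(1-q^{2j}(1+u)^e)},
$$
where the complementary involution $\mathcal{U}(e;q)\mathcal{V}(e;q)=1$ (both numerator and denominator of this product range over $\{q^k(1+u)^e : 1\le k\le n-1\}$) gives $\mathcal{B}_a = \mathcal{U}(2a;q)/\mathcal{U}(2a+1;q)$ and hence $\prod_a\mathcal{B}_a^4 = \prod_{e=0}^{2\ell-1}\mathcal{U}(e;q)^{4(-1)^e}$. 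Since $\sum_{e=0}^{2\ell-1}(-1)^e = 0$, the constant factor $\mathcal{U}(0;q)=(q^2;q^2)_m/(q;q^2)_m$ cancels, leaving only the perturbative corrections expanded via $1-(1+u)^eq^r = (1-q^r)\bigl(1-\tfrac{q^r}{1-q^r}(eu+\binom{e}{2}u^2)\bigr)+O(u^3)$. The combinatorial heart of the proof is to match these parity-alternating sums with the $u$- and $u^2$-coefficients of $c(m;q)^{2\ell}$, using the classical congruence $(q;q)_{n-1}\equiv n\pmod{\Phi_n(q)}$ and the complex-conjugation symmetry at $q=\zeta_n$ pairing the odd residues $\{1,3,\dots,n-2\}$ with the even residues $\{2,4,\dots,n-1\}$; the same symmetry yields $c(m;q)\equiv 1\pmod{\Phi_n(q)}$. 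Granting this identification, $[c(m;q)^{2\ell}-1]u^2\equiv 0\pmod{\Phi_n(q)^3}$ gives the final combination
$$
c(m;q)^{2\ell}\Bigl(1-\tfrac{\ell(2\ell+1)(n^2-1)}{6}u^2\Bigr) \;\equiv\; c(m;q)^{2\ell} - \tfrac{\ell(2\ell+1)(n^2-1)}{6}u^2 \pmod{\Phi_n(q)^3},
$$
which is the assertion of the lemma.
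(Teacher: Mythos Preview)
Your decomposition into the ``divisible-by-$n$'' factor $\mathcal{E}(q)$ and the block product $\prod_a\mathcal{B}_a(q)^4$ is correct, and the computation $\mathcal{E}(q)\equiv 1-\tfrac{\ell(2\ell+1)(n^2-1)}{6}u^2\pmod{\Phi_n(q)^3}$ checks out (e.g.\ for $\ell=1$ one gets $(1+u)^{2(n-1)}\bigl(\tfrac{2+u}{1+(1+u)^n}\bigr)^4=1-\tfrac{n^2-1}{2}u^2+O(u^3)$, as claimed). The telescoping $\mathcal{B}_a=\mathcal{U}(2a)/\mathcal{U}(2a+1)$ is also fine.

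The gap is the step you yourself flag with ``Granting this identification'': the congruence
\[
\prod_{a=0}^{\ell-1}\mathcal{B}_a(q)^4\equiv c\Big(\frac{n-1}2;q\Big)^{2\ell}\pmod{\Phi_n(q)^3}
\]
is the entire substance of the lemma, and the tools you cite are not enough to prove it. Complex-conjugation symmetry at $q=\zeta_n$ and the congruence $(q;q)_{n-1}\equiv n\pmod{\Phi_n(q)}$ give only \emph{modulo $\Phi_n(q)$} information: they pin down the constant term in the $u$-expansion but say nothing about the $u$- and $u^2$-coefficients. Matching those coefficients amounts to a $q$-Wolstenholme/$q$-Lehmer statement\,---\,a congruence for the harmonic-type sums $\sum_{b}\frac{q^{2b}}{1-q^{2b}}$, $\sum_b\frac{q^{2b+1}}{1-q^{2b+1}}$ modulo $\Phi_n(q)^2$\,---\,which does not follow from symmetry alone (already at $q\to1$ this is Wolstenholme's theorem, not a parity argument).

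The paper's proof supplies exactly this missing input from outside: it rewrites $c(k;q)$ via $q$-binomials and invokes Straub's supercongruence \cite[Theorem~2.2]{St19} for $\qbin{2\ell}{\ell}_{q^{n^2}}\big/\qbin{2\ell n}{\ell n}_q\pmod{\Phi_n(q)^3}$, and then Pan's $q$-Lehmer congruence \cite[Theorem~1.2]{Pa07} (the paper's eq.~\eqref{eq:-1/2}) to identify the remaining product with $c\bigl(\tfrac{n-1}{2};q\bigr)^{2\ell}$. Those two results are precisely the ``combinatorial heart'' you are waving at; without them, or an independent proof of the mod-$\Phi_n(q)^3$ matching, your argument is incomplete.
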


The resulting expression for $c(\ell;q^{n^2})/c(\ell n;q)$ makes sense, more generally, for $\ell\in\frac12\mathbb Z$.
Since the left-hand side in \eqref{eq:c} is also defined for those $\ell$, the congruence \eqref{eq:c} is implied by
\begin{equation}
Q(\ell;q)\equiv c\Big(\frac{n-1}2;q\Big)^{2\ell}-\frac{\ell(2\ell+1)(n^2-1)}{6}(q^n-1)^2\pmod{\Phi_n(q)^3},
\label{eq:cc}
\end{equation}
where
\begin{equation}
Q(\ell;q)=\sum_{k=0}^{(n-1)/2}\frac{c(\ell n+k;q)}{c(\ell n;q)}\Bigg/\sum_{k=0}^{(n-1)/2}c(k;q).
\label{eq:quot}
\end{equation}
In what follows we will be verifying the congruence \eqref{eq:cc}.

\begin{proof}[Proof of Lemma~\textup{\ref{lem:n^2}}]
Write
$$
c(k;q)=\frac{(q;q)_{2k}^4}{(q^2;q^2)_k^8}\,q^{2k}
={\qbin{2k}k}^4\frac{q^{2k}}{\prod_{j=1}^k(1+q^j)^8}
$$
and use \cite[Theorem~2.2]{St19} to get
\begin{align*}
\frac{c(\ell;q^{n^2})}{c(\ell n;q)}
&=\Bigg(\frac{{\qbin{2\ell}\ell}_{q^{n^2}}}{{\qbin{2\ell n}{\ell n}}}\Bigg)^4q^{2\ell n(n-1)}
\frac{\prod_{j=1}^{\ell n}(1+q^j)^8}{\prod_{j=1}^\ell(1+q^{jn^2})^8}
\\
&\equiv q^{2\ell n(n-1)}\frac{\prod_{j=1}^{\ell n}(1+q^j)^8}{\prod_{j=1}^\ell(1+q^{jn^2})^8}
+\frac{\ell^2(n^2-1)}{6}\,(q^n-1)^2\pmod{\Phi_n(q)^3}.
\end{align*}
Notice that for
$$
F_\ell(q)=\frac{\prod_{j=1}^{\ell n}(1+q^j)}{\prod_{j=1}^\ell(1+q^{jn^2})}
$$
we have $F_0(q)=1$ and
\begin{align*}
F_{\ell}(q)
&\equiv F_1(q)^\ell\cdot\bigg(1-\frac{n^2-1}8(q^n-1)^2\bigg)^{\ell(\ell-1)/2}
\\
&\equiv\biggl(\frac{\prod_{j=1}^n(1+q^j)}{1+q^{n^2}}\biggr)^\ell\bigg(1-\frac{\ell(\ell-1)(n^2-1)}{16}(q^n-1)^2\bigg)
\\
&\equiv\biggl(\frac{\prod_{j=1}^n(1+q^j)}{1+q^{n^2}}\biggr)^\ell-\frac{\ell(\ell-1)(n^2-1)}{16}(q^n-1)^2\pmod{\Phi_n(q)^3}
\end{align*}
for $\ell=0,1,\dots$, hence
$$
\frac{c(\ell;q^{n^2})}{c(\ell n;q)}
\equiv q^{2\ell n(n-1)}\biggl(\frac{\prod_{j=1}^n(1+q^j)}{1+q^{n^2}}\biggr)^{8\ell}-\frac{\ell(2\ell-3)(n^2-1)}{6}(q^n-1)^2\pmod{\Phi_n(q)^3}.
$$
Finally, notice the following congruence:
\begin{align}
\bigg(\frac{(q;q^2)_{(n-1)/2}}{(q^2;q^2)_{(n-1)/2}}\bigg)^2q^{(n-1)/2}
&\equiv q^{n(n-1)/2}\biggl(\frac{\prod_{j=1}^n(1+q^j)}{1+q^{n^2}}\biggr)^2
\nonumber\\ &\quad
+\frac{n^2-1}{6}(q^n-1)^2\pmod{\Phi_n(q)^3},
\label{eq:-1/2}
\end{align}
which follows from \cite[Theorem 1.2]{Pa07} (though stated there for $n$ odd primes but proved without the primality assumption; see also \cite[Eq.~(1.5)]{LPZ15}).
\end{proof}

\begin{lemma}
\label{lem:-1/2}
The congruence \eqref{eq:cc} is true for $\ell=-\frac12$.
\end{lemma}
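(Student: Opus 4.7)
The plan is to establish the stronger identity
\[
Q\bigl(-\tfrac12;q\bigr)=\frac{1}{c(m;q)}, \qquad m=\tfrac{n-1}{2},
\]
as an equality in $\mathbb{Q}(q)$. Since the factor $\ell(2\ell+1)$ in \eqref{eq:cc} vanishes at $\ell=-\tfrac12$, this is precisely what the lemma predicts. The factors of $c(m;q)$ read off from \eqref{eq:cf} only involve $1-q^{a}$ with $1\le a\le n-1$, so $c(m;q)$ is a $\Phi_n(q)$-adic unit and the identity makes sense (and the congruence mod $\Phi_n(q)^3$ is automatic from it).

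The key is a term-by-term matching: for $0\le k\le m$,
\[
\frac{c(-n/2+k;q)}{c(-n/2;q)}=\frac{c(m-k;q)}{c(m;q)}
\]
as rational functions of $q$. I would verify this by unpacking the left-hand side to $q^{2k}\bigl((q^{1-n};q^2)_k/(q^{2-n};q^2)_k\bigr)^{4}$ and using the reversal $1-q^{a}=-q^{a}(1-q^{-a})$ on each of the $k$ factors in both numerator and denominator. The minus signs cancel in matched pairs; a $q^{-4k}$ pulls out so the prefactor $q^{2k}$ becomes $q^{-2k}$; and the exponents $1-n+2j$ and $2-n+2j$ of the original products are replaced by $n-1-2j$ and $n-2-2j$ for $j=0,\ldots,k-1$. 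Reading these reversed products as $(q^{2};q^{2})_{m}/(q^{2};q^{2})_{m-k}$ and $(q;q^{2})_{m}/(q;q^{2})_{m-k}$ respectively, and then applying \eqref{eq:cf} to $c(m-k;q)$ and $c(m;q)$, matches the right-hand side on the nose.

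With the termwise identity in hand, summing over $k=0,\ldots,m$ and then substituting $k\mapsto m-k$ on the right yields
\[
\sum_{k=0}^{m}\frac{c(-n/2+k;q)}{c(-n/2;q)}=\frac{1}{c(m;q)}\sum_{k=0}^{m}c(k;q),
\]
and dividing by $\sum_{k=0}^{m}c(k;q)$ concludes. I do not expect any substantive obstacle: the only real care is in tracking signs and powers of $q$ during the reversal step, and no auxiliary result is needed beyond \eqref{eq:cf}. The entire argument boils down to spotting the palindromic symmetry $k\leftrightarrow m-k$ of the summand of $H(N;q)$ around the half-integer shift $\ell=-\tfrac12$.
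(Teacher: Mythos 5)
Your proposal is correct and follows essentially the same route as the paper: both proofs exploit the reversal symmetry $k\mapsto m-k$ to show that $Q\bigl(-\tfrac12;q\bigr)=c(m;q)^{-1}$ holds as an exact identity in $\mathbb{Q}(q)$, which is what \eqref{eq:cc} reduces to at $\ell=-\tfrac12$ since $\ell(2\ell+1)$ vanishes there. The only (cosmetic) difference is that you match terms individually via $c(-n/2+k;q)/c(-n/2;q)=c(m-k;q)/c(m;q)$, whereas the paper reverses the order of summation in the whole sum and invokes the symmetry $c(k;q)=c(k;1/q)$.
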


\begin{proof}
We rearrange the summation:
\begin{align*}
\sum_{k=0}^{(n-1)/2}\frac{c(-\frac12n+k;q)}{c(-\frac12n;q)}
&=\sum_{k=0}^{(n-1)/2}\bigg(\frac{(q^{-n+1};q^2)_k}{(q^{-n+2};q^2)_k}\bigg)^4q^{2k}
\\
&=\bigg(\frac{(q^{-n+1};q^2)_{(n-1)/2}}{(q^{-n+2};q^2)_{(n-1)/2}}\bigg)^4
\sum_{k=0}^{(n-1)/2}\bigg(\frac{(q^{-1};q^{-2})_k}{(q^{-2};q^{-2})_k}\bigg)^4q^{n-1-2k}
\displaybreak[2]\\
&=\bigg(\frac{(q^2;q^2)_{(n-1)/2}}{(q;q^2)_{(n-1)/2}}\bigg)^4q^{-n+1}
\sum_{k=0}^{(n-1)/2}\bigg(\frac{(q;q^2)_k}{(q^2;q^2)_k}\bigg)^4q^{2k}
\\
&=c\Big(\frac{n-1}2;q\Big)^{-1}\sum_{k=0}^{(n-1)/2}c(k;q).
\end{align*}
This proves \eqref{eq:cc} for $\ell=-\frac12$.
\end{proof}

\begin{lemma}
\label{lem:Q}
For the quotient~\eqref{eq:quot} we have
\begin{align*}
Q(\ell;q)
&\equiv1-8\ell\Sigma_1(q)(q^n-1)
\\ &\quad
-4\ell(2\ell-1)\Sigma_1(q)(q^n-1)^2+8\ell^2\Sigma_2(q)(q^n-1)^2
\pmod{\Phi_n(q)^3},
\end{align*}
where
$$
\Sigma_1(q)=\frac{\sum_{k=0}^{(n-1)/2}c(k;q)S_1(k)}{\sum_{k=0}^{(n-1)/2}c(k;q)},
\quad
\Sigma_2(q)=\frac{\sum_{k=0}^{(n-1)/2}c(k;q)(4S_1(k)^2-S_2(k))}{\sum_{k=0}^{(n-1)/2}c(k;q)}
$$
and the `harmonic sums' $S_1(k)=S_1(k;q)$, $S_2(k)=S_2(k;q)$ are defined in~\eqref{eq:Snot} below.
\end{lemma}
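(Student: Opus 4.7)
The plan is to isolate the entire $\ell$-dependence of $c(\ell n+k;q)/c(\ell n;q)$ in a single rational function of $x = q^{2\ell n}$ that is regular at $x=1$, and then Taylor-expand around $x=1$ to second order in $T := q^n - 1$. Since $\Phi_n(q)\mid T$ we have $T^3\equiv 0\pmod{\Phi_n(q)^3}$, and the binomial theorem gives $q^{2\ell n}=(1+T)^{2\ell}\equiv 1+2\ell T+\ell(2\ell-1)T^2$, so the whole computation reduces to working modulo $T^3$.

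First, a direct telescoping in \eqref{eq:cf} yields
$$
\frac{c(\ell n+k;q)}{c(\ell n;q)} \;=\; c(k;q)\,\frac{f_k(q^{2\ell n})}{f_k(1)},
\qquad
f_k(x):=\prod_{i=0}^{k-1}\left(\frac{1-xq^{2i+1}}{1-xq^{2i+2}}\right)^{\!4}.
$$
Writing $h=q^{2\ell n}-1$, so that $h\equiv 2\ell T+\ell(2\ell-1)T^2$ and $h^2\equiv 4\ell^2T^2\pmod{T^3}$, the Taylor expansion $f_k(1+h)/f_k(1)=1+(f_k'/f_k)(1)\,h+(f_k''/(2f_k))(1)\,h^2+O(h^3)$ collapses modulo $\Phi_n(q)^3$ to
$$
1+2\ell\,\frac{f_k'(1)}{f_k(1)}\,T+\Bigl[\ell(2\ell-1)\,\frac{f_k'(1)}{f_k(1)}+2\ell^2\,\frac{f_k''(1)}{f_k(1)}\Bigr]T^2.
$$

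The central computation is that of the logarithmic derivatives at $x=1$. From $(\log f_k)'(x)=4\sum_{i=0}^{k-1}\bigl[q^{2i+2}/(1-xq^{2i+2})-q^{2i+1}/(1-xq^{2i+1})\bigr]$ one reads off $f_k'(1)/f_k(1)=-4S_1(k)$, where $S_1(k)=\sum_{i=0}^{k-1}\bigl[q^{2i+1}/(1-q^{2i+1})-q^{2i+2}/(1-q^{2i+2})\bigr]$ is the $q$-harmonic sum of the statement; differentiating once more and using the identity $f''/f=((\log f)')^2+(\log f)''$ gives $f_k''(1)/f_k(1)=16S_1(k)^2-4S_2(k)$ for the companion sum $S_2(k)=\sum_{i=0}^{k-1}\bigl[q^{4i+2}/(1-q^{2i+1})^2-q^{4i+4}/(1-q^{2i+2})^2\bigr]$. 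Substituting, the $T$-coefficient becomes $-8\ell S_1(k)$ and the $T^2$-coefficient collapses to $-4\ell(2\ell-1)S_1(k)+8\ell^2(4S_1(k)^2-S_2(k))$.

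To conclude, I would multiply through by $c(k;q)$, sum over $k=0,\dots,(n-1)/2$, and divide by $\sum_{k=0}^{(n-1)/2}c(k;q)$; the weighted means of $S_1(k)$ and $4S_1(k)^2-S_2(k)$ are by definition $\Sigma_1(q)$ and $\Sigma_2(q)$, producing exactly the asserted expression for $Q(\ell;q)$. The only genuine care required is in assembling the $T^2$-coefficient, where the quadratic term of $(1+T)^{2\ell}$ crossed with $f_k'(1)/f_k(1)$ must be combined with the $2\ell^2\,f_k''(1)/f_k(1)$ contribution from $h^2$; the sole tacit background ingredient is that $\sum_{k=0}^{(n-1)/2}c(k;q)$ is a unit modulo $\Phi_n(q)$, which follows from the non-vanishing of Stienstra's closed form~\cite{St19} at $q=\zeta_n$.
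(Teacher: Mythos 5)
Your proposal is correct and follows essentially the same route as the paper: the paper likewise Taylor-expands the ratio $c(\ell n+k;q)/(c(\ell n;q)c(k;q))$ in the variable $a=q^{2\ell n}$ about $a=1$ to second order (quoting the ready-made expansion of $(aq;q)_k/(q;q)_k$ from \cite{Zu19} where you instead compute the logarithmic derivatives of $f_k(x)$ directly), obtains the same coefficients $4S_1(k)$ and $2(4S_1(k)^2-S_2(k))$, and then substitutes $q^{2\ell n}=(1+(q^n-1))^{2\ell}$ and averages against the weights $c(k;q)$. Only a cosmetic remark: the closed form you invoke for the non-vanishing of $\sum_k c(k;\zeta_n)$ is due to Straub, not Stienstra, though \cite{St19} is the right reference.
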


\begin{proof}
We essentially apply to \eqref{eq:cf} the strategy from \cite{Zu19}. Namely, using
\begin{align*}
\frac{(aq;q)_k}{(q;q)_k}
&=1+(1-a)\sum_{j=1}^k\frac{q^j}{1-q^j}
\\ &\quad
+\frac{(1-a)^2}2\bigg(\bigg(\sum_{j=1}^k\frac{q^j}{1-q^j}\bigg)^2-\sum_{j=1}^k\frac{q^{2j}}{(1-q^j)^2}\bigg)+O\big((1-a)^3\big)
\end{align*}
with $a=q^{2\ell n}$, we get, for $0\le k\le(n-1)/2$,
\begin{align*}
\frac{c(\ell n+k;q)}{c(\ell n;q)c(k;q)}
&\equiv1+4S_1(k;q)(1-q^{2\ell n})
\\ &\quad
+2(4S_1(k;q)^2-S_2(k;q))(1-q^{2\ell n})^2
\pmod{\Phi_n(q)^3},
\end{align*}
where
\begin{equation}
\begin{aligned}
S_1(k)=S_1(k;q)
&=\sum_{j=1}^{2k}\frac{q^j}{1-q^j}-2\sum_{j=1}^k\frac{q^{2j}}{1-q^{2j}}
=\sum_{j=1}^k\frac{q^j}{1+q^j}+\sum_{j=k+1}^{2k}\frac{q^j}{1-q^j},
\\
S_2(k)=S_2(k;q)
&=\sum_{j=1}^{2k}\frac{q^{2j}}{(1-q^j)^2}-2\sum_{j=1}^k\frac{q^{4j}}{(1-q^{2j})^2}.
\end{aligned}
\label{eq:Snot}
\end{equation}
Then
\begin{align*}
\sum_{k=0}^{(n-1)/2}\frac{c(\ell n+k)}{c(\ell n)}
&\equiv\sum_{k=0}^{(n-1)/2}c(k)
+4(1-q^{2\ell n})\sum_{k=0}^{(n-1)/2}c(k)S_1(k)
\\ &\quad
+2(1-q^{2\ell n})^2\sum_{k=0}^{(n-1)/2}c(k)(4S_1(k)^2-S_2(k))
\\
&\equiv\sum_{k=0}^{(n-1)/2}c(k)
-4\big(2\ell(q^n-1)+\ell(2\ell-1)(q^n-1)^2\big)\sum_{k=0}^{(n-1)/2}c(k)S_1(k)
\\ &\quad
+8\ell^2(q^n-1)^2\sum_{k=0}^{(n-1)/2}c(k;q)(4S_1(k)^2-S_2(k))
\pmod{\Phi_n(q)^3}.
\qedhere
\end{align*}
\end{proof}

\begin{lemma}
\label{lem:eval}
For $m=(n-1)/2$ and the sums defined in \eqref{eq:Snot} we have
$S_1(m;q)+S_2(m;q)\equiv0\pmod{\Phi_n(q)}$.
\end{lemma}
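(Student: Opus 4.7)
The plan is to verify the divisibility by $\Phi_n(q)$ by evaluating both sides at an arbitrary primitive $n$-th root of unity $\zeta$. The denominators of $S_1(m;q)$ and $S_2(m;q)$ are products of factors $1-q^\ell$ with $1\le\ell\le n-1$, and since $n$ is odd (because $\gcd(n,d)=\gcd(n,2)=1$), all of these are coprime to $\Phi_n(q)$, so this evaluation is legitimate.

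The first step is to split the ranges $j=1,\dots,2m=n-1$ in the definitions by parity. In each case the even-$j$ contribution combines with the subtracted sum over $j=1,\dots,m$ to give a single term with a sign, leaving the clean expressions
$$
S_1(m;q)=\sum_{\ell\text{ odd}}\frac{q^\ell}{1-q^\ell}-\sum_{\ell\text{ even}}\frac{q^\ell}{1-q^\ell},
\qquad
S_2(m;q)=\sum_{\ell\text{ odd}}\frac{q^{2\ell}}{(1-q^\ell)^2}-\sum_{\ell\text{ even}}\frac{q^{2\ell}}{(1-q^\ell)^2},
$$
where each $\ell$ runs over $\{1,\dots,n-1\}$ of the indicated parity. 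The key manipulation is then the involution $\ell\mapsto n-\ell$ on $\{1,\dots,n-1\}$, which swaps parities because $n$ is odd. At $q=\zeta$, using $1-\zeta^{-\ell}=-\zeta^{-\ell}(1-\zeta^\ell)$, this substitution gives
$$
\frac{\zeta^{n-\ell}}{1-\zeta^{n-\ell}}=-\frac{1}{1-\zeta^\ell},
\qquad
\frac{\zeta^{2(n-\ell)}}{(1-\zeta^{n-\ell})^2}=\frac{1}{(1-\zeta^\ell)^2},
$$
allowing me to rewrite the even-parity sums as sums over odd $\ell$ and collapse each $S_i(m;\zeta)$ to a single sum over odd $\ell\in\{1,\dots,n-2\}$, namely $S_1(m;\zeta)=\sum_{\ell\text{ odd}}\frac{1+\zeta^\ell}{1-\zeta^\ell}$ and $S_2(m;\zeta)=\sum_{\ell\text{ odd}}\frac{\zeta^{2\ell}-1}{(1-\zeta^\ell)^2}$.

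The conclusion is then immediate from the factorisation $\zeta^{2\ell}-1=-(1-\zeta^\ell)(1+\zeta^\ell)$, which yields $S_2(m;\zeta)=-S_1(m;\zeta)$; thus $S_1(m;\zeta)+S_2(m;\zeta)=0$ for every primitive $n$-th root of unity, which is the claim. I do not foresee a real obstacle: the only thing to watch carefully is the parity bookkeeping in the first step, and the only genuine insight required is the algebraic coincidence $(\zeta^{2\ell}-1)/(1-\zeta^\ell)^2=-(1+\zeta^\ell)/(1-\zeta^\ell)$ that makes the odd-parity residues of $S_1$ and $S_2$ negatives of each other.
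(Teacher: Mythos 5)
Your proof is correct and follows essentially the same route as the paper: evaluate at a primitive $n$th root of unity and exploit the involution $j\mapsto n-j$, which is legitimate because no denominator $1-q^j$ with $1\le j\le n-1$ is divisible by $\Phi_n(q)$. The only cosmetic difference is that the paper first merges $S_1+S_2$ into the single sum $\sum_{j=1}^{n-1}\frac{q^j}{(1-q^j)^2}-2\sum_{j=1}^{m}\frac{q^{2j}}{(1-q^{2j})^2}$ via $\frac{x}{1-x}+\frac{x^2}{(1-x)^2}=\frac{x}{(1-x)^2}$ and applies the symmetry once, whereas you apply the symmetry to each sum separately and observe the cancellation at the end.
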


\begin{proof}
Consider
$$
S(q)=S_1(m;q)+S_2(m;q)=\sum_{j=1}^{n-1}\frac{q^j}{(1-q^j)^2}-2\sum_{j=1}^{(n-1)/2}\frac{q^{2j}}{(1-q^{2j})^2}
$$
modulo $\Phi_n(q)$. Take $\zeta=\zeta_n$ to be an $n$th primitive root of unity. Then
$$
\sum_{j=1}^{(n-1)/2}\frac{\zeta^{2j}}{(1-\zeta^{2j})^2}
=\sum_{j=1}^{(n-1)/2}\frac{\zeta^{n-2j}}{(1-\zeta^{n-2j})^2}
=\frac12\sum_{j=1}^{n-1}\frac{\zeta^j}{(1-\zeta^j)^2}
$$
implying $S(\zeta)=0$. The equality precisely means that $S(q)\equiv0\pmod{\Phi_n(q)}$.
\end{proof}

\begin{lemma}
\label{lem:spec}
For $n$ odd, put $m=(n-1)/2$. Then
\begin{align*}
c\Big(\frac{n-1}2;q\Big)
&=\bigg(\frac{(q;q^2)_m}{(q^2;q^2)_m}\bigg)^4q^{2m}
\\
&\equiv1-2S_1(m)(q^n-1)
+(S_1(m)+2S_1(m)^2)(q^n-1)^2\pmod{\Phi_n(q)^3},
\end{align*}
with $S_1(m)=S_1(m;q)$ defined in~\eqref{eq:Snot}.
\end{lemma}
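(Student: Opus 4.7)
The plan is to run the $(1-a)$-expansion technique of the proof of Lemma~\ref{lem:Q}, but at the special value $a=q^{-n}$, and to recover $c(m;q)$ from the result via a square-root extraction. First, I would derive (exactly as in the proof of Lemma~\ref{lem:Q}) the master expansion
\[
\frac{(aq;q^2)_m(q^2;q^2)_m}{(q;q^2)_m(aq^2;q^2)_m}
\equiv 1+(1-a)\,S_1(m)+\tfrac{(1-a)^2}{2}\bigl(S_1(m)^2-S_2(m)\bigr)\pmod{(1-a)^3}
\]
with $S_1(m), S_2(m)$ as in \eqref{eq:Snot}, by expanding both numerator and denominator as $\prod(1+(1-a)\cdot\text{coefficient})$ and recognizing the differences of the first- and second-order sums as $S_1$ and $S_2$.

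Next, I would specialize $a=q^{-n}$. The Pochhammer reversal $1-q^{-k}=-q^{-k}(1-q^k)$, applied term by term with re-indexing (even $\leftrightarrow$ odd exponents), yields the identities
\[
(q^{-n+1};q^2)_m=(-1)^m q^{-m(m+1)}(q^2;q^2)_m,\qquad (q^{-n+2};q^2)_m=(-1)^m q^{-m^2}(q;q^2)_m,
\]
which collapse the fourth power of the master ratio at $a=q^{-n}$ to
\[
\left(\frac{(q^{-n+1};q^2)_m(q^2;q^2)_m}{(q;q^2)_m(q^{-n+2};q^2)_m}\right)^{\!4}=q^{-4m}\!\left(\frac{(q^2;q^2)_m}{(q;q^2)_m}\right)^{\!8}=\frac{1}{c(m;q)^2}.
\]
Writing $u:=q^n-1$, reducing $(1-a)=u/q^n\equiv u-u^2$ and $(1-a)^2\equiv u^2$ modulo $\Phi_n(q)^3$, and raising the master expansion to the fourth power yields
\[
\frac{1}{c(m;q)^2}\equiv 1+4uS_1(m)+u^2\bigl(-4S_1(m)+8S_1(m)^2-2S_2(m)\bigr)\pmod{\Phi_n(q)^3}.
\]
Lemma~\ref{lem:eval} now replaces $-2S_2(m)$ by $2S_1(m)$ inside the $u^2$-coefficient (only its class modulo $\Phi_n(q)$ matters, since that coefficient is already multiplied by $u^2\in\Phi_n(q)^2$), and inverting modulo $\Phi_n(q)^3$ produces
\[
c(m;q)^2\equiv 1-4uS_1(m)+u^2\bigl(2S_1(m)+8S_1(m)^2\bigr)\pmod{\Phi_n(q)^3},
\]
which is exactly the square of the target $F:=1-2uS_1(m)+u^2(S_1(m)+2S_1(m)^2)$.

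The main obstacle is the final square-root extraction, because $2$ is not a unit modulo $\Phi_n(q)$ in general. To handle it, I would first verify that $c(m;\zeta_n)=1$ (a short direct calculation using $\zeta_n^{n(n-1)}=1$ and $\prod_{k=1}^{n-1}(1-\zeta_n^k)=n$), so both $c(m;q)$ and $F$ reduce to $1$ modulo $\Phi_n(q)$. Writing $c(m;q)-F=\Phi_n V$ and $c(m;q)+F=2+\Phi_n W$, the vanishing of $(c-F)(c+F)$ modulo $\Phi_n(q)^3$ reduces to $2V\equiv 0\pmod{\Phi_n(q)}$ after one division by $\Phi_n(q)$. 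Since $\mathbb{Z}[q]/\Phi_n(q)\cong\mathbb{Z}[\zeta_n]$ is torsion-free as a $\mathbb{Z}$-module, this forces $V\equiv 0\pmod{\Phi_n(q)}$, and one iteration of the same argument (with $V=\Phi_n V'$) gives $V'\equiv 0\pmod{\Phi_n(q)}$, whence $c(m;q)\equiv F\pmod{\Phi_n(q)^3}$, the claimed congruence.
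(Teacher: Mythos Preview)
Your argument is correct and rests on the same engine as the paper's proof—expand the Pochhammer ratio in powers of $(1-a)$, specialise $a=q^{-n}$ via the reversal identities, and invoke Lemma~\ref{lem:eval}—but the execution differs in one point. The paper works with the ``half'' quantity $T(q)=\bigl((q;q^2)_m/(q^2;q^2)_m\bigr)^2q^m$, observes the exact symmetry $T(q)=T(q^{-1})$, and rewrites $T(q^{-1})$ through the same Pochhammer reversals you use; this yields $T(q)\equiv T(q)^{-1}\cdot E\pmod{\Phi_n(q)^3}$ and hence $c(m;q)=T(q)^2\equiv E$ directly, with no square root to extract. Your route through the fourth power lands on $c(m;q)^2$ instead and therefore costs you the extra step of checking $c(m;\zeta_n)=1$ and isolating the correct root. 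That step is valid (indeed, under the paper's convention that congruences live in $\mathbb{Z}(q)$, the integer $2$ is already coprime to $\Phi_n(q)$ and hence a unit modulo it, so your caution about $2$ is not strictly needed), but exploiting the $q\leftrightarrow q^{-1}$ invariance of $T$ is the cleaner shortcut that sidesteps the issue entirely.
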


\begin{proof}
Consider
\begin{equation*}
T(q)=T(n;q)
=\bigg(\frac{(q;q^2)_m}{(q^2;q^2)_m}\bigg)^2q^m
=\frac{(q;q^2)_m^4}{(q;q)_{2m}^2}q^m
\end{equation*}
already featured in \eqref{eq:-1/2}. We get
\begin{align*}
T(q)
&=T(q^{-1})
=\frac{(q^{-1};q^{-2})_m^4}{(q^{-1};q^{-1})_{2m}^2}q^{-m}
=\frac{(q^{-n+2};q^2)_m^4}{(q^{-n+1};q)_{2m}^2}q^{-m}
\\
&\equiv\frac{(q^2;q^2)_m^4}{(q;q)_{2m}^2}q^{-m}\big(1-2(1-q^{-n})S_1(m)
+2(1-q^{-n})^2S_1(m)^2+(1-q^{-n})^2S_2(m)\big)
\\
&\equiv\frac{(q^2;q^2)_m^4}{(q;q)_{2m}^2}q^{-m}\big(1-2(1-q^{-n})S_1(m)
\\ &\quad\qquad
+2(1-q^{-n})^2S_1(m)^2-(1-q^{-n})^2S_1(m)\big)
\pmod{\Phi_n(q)^3}
\end{align*}
with the help of Lemma \ref{lem:eval}.
Comparing two expressions for $T(q)$ and using $1-q^{-n}\equiv(q^n-1)-(q^n-1)^2$ and $(1-q^{-n})^2\equiv(q^n-1)^2$ modulo $\Phi_n(q)^3$, we deduce the desired claim.
\end{proof}

\begin{lemma}
\label{lem:more}
In the previous notation, the following congruence is true\textup:
$$
\Sigma_1+4\Sigma_1^2-\Sigma_2\equiv\frac{n^2-1}{24}\pmod{\Phi_n(q)}.
$$
\end{lemma}

\begin{proof}
It follows from Lemma~\ref{lem:Q} that
\begin{align*}
Q(-\ell;q)Q(\ell;q)
&\equiv1-16\ell^2(\Sigma_1+4\Sigma_1^2-\Sigma_2)(q^n-1)^2
\\
&\equiv1-4(\Sigma_1+4\Sigma_1^2-\Sigma_2)(q^{2\ell n}-1)^2
\pmod{\Phi_n(q)^3},
\end{align*}
so that the required assertion is equivalent to
\begin{equation}
Q(-\ell;q)Q(\ell;q)
\equiv1-\frac{n^2-1}6(q^{2\ell n}-1)^2
\pmod{\Phi_n(q)^3}.
\label{eq:ver}
\end{equation}
Introduce the rational function
$$
F_n(a;q)=\sum_{k=0}^{n-1}\frac{(aq;q^2)_k^4}{(aq^2;q^2)_k^4}q^{2k}
$$
with the motive that $Q(\ell;q)=F_n(q^{2\ell n};q)/F_n(1;q)$. This means that if
$$
\frac{F_n(a;q)F_n(a^{-1};q)}{F_n(1;q)^2}
=1+f(q)(a-1)^2+O\big((a-1)^3\big)
$$
is the Taylor series expansion around $a=1$ (the coefficient of $a-1$ vanishes because of the symmetry $a\leftrightarrow1/a$ of the expression), then
$$
Q(-\ell;q)Q(\ell;q)
\equiv1+f(q)(q^{2\ell n}-1)^2
\pmod{\Phi_n(q)^3}.
$$
To compute $f(q)\pmod{\Phi_n(q)}$ we employ the following remarkable identity:
\begin{equation}
\frac{F_n(a;\zeta)F_n(a^{-1};\zeta)}{F_n(1;\zeta)^2}
=\bigg(\frac{na^{(n-1)/2}}{1+a+a^2+\dots+a^{n-1}}\bigg)^4
\label{eq:beau}
\end{equation}
valid for any $n$th primitive root of unity $\zeta$. It remains to observe that
$$
\frac{na^{(n-1)/2}}{1+a+a^2+\dots+a^{n-1}}
=1-\frac{n^2-1}{24}(a-1)^2-\frac{n^2-1}{24}(a-1)^3+O\big((a-1)^4\big),
$$
so that $f(\zeta)=-(n^2-1)/6$ implying $f(q)\equiv-(n^2-1)/6\pmod{\Phi_n(q)}$, hence the congruence \eqref{eq:ver} and our lemma.
\end{proof}

\begin{proof}[Proof of the main result]
Connecting the results of Lemmas \ref{lem:-1/2}, \ref{lem:Q} and then applying Lemma~\ref{lem:more} we conclude that
\begin{align*}
c\Big(\frac{n-1}2;q\Big)^{-1}
&\equiv1+4\Sigma_1(q^n-1)-(4\Sigma_1-2\Sigma_2)(q^n-1)^2
\\
&\equiv1+4\Sigma_1(q^n-1)-\bigg(2\Sigma_1-8\Sigma_1^2+\frac{n^2-1}{12}\bigg)(q^n-1)^2
\pmod{\Phi_n(q)^3}.
\end{align*}
On the other hand, from Lemma~\ref{lem:spec} we find out that
$$
c\Big(\frac{n-1}2;q\Big)^{-1}
\equiv1+2S_1(m)(q^n-1)-(S_1(m)-2S_1(m)^2)(q^n-1)^2\pmod{\Phi_n(q)^3}.
$$
The comparison of these two representations leads to
$$
S_1(m;q)\equiv2\Sigma_1(q)-\frac{n^2-1}{24}(q^n-1)\pmod{\Phi_n(q)^2}.
$$
Thus,
\begin{align*}
Q(\ell;q)
&\equiv1-4\ell\bigg(S_1(m)+\frac{n^2-1}{24}(q^n-1)\bigg)(q^n-1)-2\ell(2\ell-1)S_1(m)(q^n-1)^2
\\ &\quad
+4\ell^2\bigg(S_1(m)+2S_1(m)^2-\frac{n^2-1}{12}\bigg)(q^n-1)^2
\pmod{\Phi_n(q)^3}
\displaybreak[2]\\
&=1-4\ell S_1(m)(q^n-1)
\\ &\quad
+\bigg(2\ell S_1(m)+8\ell^2S_1(m)^2-\frac{\ell(2\ell+1)(n^2-1)}6\bigg)(q^n-1)^2
\\
&\equiv c\Big(\frac{n-1}2;q\Big)^{2\ell}-\frac{\ell(2\ell+1)(n^2-1)}6(q^n-1)^2
\pmod{\Phi_n(q)^3}
\end{align*}
establishing \eqref{eq:cc}, hence \eqref{eq:c} and \eqref{eq:D} for $r_1=r_2=\frac12$.
\end{proof}

\section{Concluding remarks}
\label{sec:final}

All the ingredients of our proof in Section~\ref{sec:1/2} extend more generally to the remaining thirteen cases $(r_1,r_2)$ of the main theorem.
The corresponding modifications of statements are not entirely trivial (and most are incredibly complicated!) but we do not think they deserve recording here.
It is however worth mentioning that the remarkable identity \eqref{eq:beau} is valid for \emph{any} rational function
$$
F_n(a;q)=F_{n;r_1,r_2}(a;q)
=\sum_{k=0}^{n-1}\frac{(aq^{dr_1},aq^{d(1-r_1)},aq^{dr_2},aq^{d(1-r_2)};q^d)_k}{(aq^d;q^d)_k^4}q^{dk}
$$
(allowing to deal one with \eqref{eq:cgen} in particular) and an $n$th primitive root of unity $\zeta$, whenever $(n,d)=1$.
Here $r_1,r_2$ are \emph{arbitrary} rationals on the interval $(0,1)$ and $d$ is their least common denominator.
Analogues of this result can be stated for other balanced hypergeometric sums.
One example (visually related to \cite[Conjecture~3.13]{GZ20b}) sources the rational function
$$
G_n(a;q)=G_{n;r}(a;q)
=\sum_{k=0}^{n-1}\frac{2(aq^{dr},aq^{d(1-r)};q^d)_k}{(aq^d;q^d)_k^2(1+q^{dk})}q^{dk},
$$
where $d$ is the denominator of rational $r\in(0,1)$ and $(n,2d)=1$; then
\begin{equation}
\frac{G_n(a;\zeta)G_n(a^{-1};\zeta)}{G_n(1;\zeta)^2}
=\bigg(n\,\frac{1-a+a^2-\dots-a^{n-2}+a^{n-1}}{1+a+a^2+\dots+a^{n-2}+a^{n-1}}\bigg)^2.
\label{eq:beau2}
\end{equation}
We leave proofs of identities \eqref{eq:beau} and \eqref{eq:beau2} as a homework to the intelligent reader.

\medskip
The form \eqref{eq:c} also suggests that $C(A,B)$ in \eqref{eq:D1} comes from the asymptotics of $c(\ell;q^{n^2})/c(\ell;q^n)$. In our $r_1=r_2=\frac12$ illustrative case we get
$$
\frac{c(\ell;q^{n^2})}{c(\ell;q^n)}
=1-\frac{\ell(2\ell+1)(n^2-1)}6(q^n-1)^2\pmod{\Phi_n(q)^3}
$$
implying
$$
c(\ell;q^{n^2})
=c(\ell;q^n)-\ell(2\ell+1)c(\ell;1)\cdot\frac{n^2-1}6(q^n-1)^2\pmod{\Phi_n(q)^3}
$$
and transforming \eqref{eq:c} into
\begin{align*}
&
\sum_{k=0}^{n-1}c(\ell n+k;q)
\equiv c(\ell;q^n)\sum_{k=0}^{n-1}c(k;q)
\\ &\qquad
-\ell(2\ell+1)c(\ell;1)\sum_{k=0}^{n-1}c(k;q)\cdot\frac{n^2-1}6(q^n-1)^2\pmod{\Phi_n(q)^3}
\end{align*}
for $\ell=1,2,\dots$\,.
This means that
$$
H(An;q)
\equiv \bigg(H(A;q^n)
-\sum_{\ell=0}^{A-1}\ell(2\ell+1)c(\ell;1)\cdot\frac{n^2-1}6(q^n-1)^2
\bigg)\sum_{k=0}^{n-1}c(k;q)\pmod{\Phi_n(q)^3}
$$
and leads to \eqref{eq:D1} with
$$
C(A,B)=\frac{H(A;1)\sum_{\ell=0}^{B-1}\ell(2\ell+1)c(\ell;1)-H(B;1)\sum_{\ell=0}^{A-1}\ell(2\ell+1)c(\ell;1)}{6H(B;1)^2}.
$$
This is precisely the form assumed in \eqref{eq:CAB} when $r_1=r_2=\frac12$,
and this generalises to the other thirteen cases along the lines.

\medskip
\noindent
\textbf{Acknowledgements.}
It is my pleasure to thank Bruce Berndt, Victor Guo, Michael Schlosser and Armin Straub for their valuable comments on this project.

\end{document}